\newtheorem{theorem}{Theorem}
\newtheorem{corollary}{Corollary}
\title{Globalization of the partial isometries of metric spaces
 and local approximation of the group of isometries of Urysohn space.}
\author{A.~M.~Vershik\thanks{St.~Petersburg Department of
Steklov Institute of Mathematics, 27 Fontanka, 191023
St.~Petersburg, Russia. E-mail: {\tt vershik@pdmi.ras.ru} The paper
partially supported by grants of RFBR 05-01-00899 and NSh
4329.2006.1}}
\begin{document}

\maketitle \tableofcontents
\bigskip
\bigskip
\bigskip
\begin{centerline}
{\textbf{Abstract}}
\end{centerline}
{We prove the equivalence of the two important facts about finite metric spaces and universal
Urysohn metric spaces $\Bbb U$, namely theorem A and theorem B below: Theorem A (Approximation):
The group of isometry $ISO(\Bbb U)$ contains everywhere dense locally finite subgroup; Theorem
G(Globalization): For each finite metric space $F$ there exists another finite metric space $\bar
F$ and isometric imbedding $j$ of $F$ to $\bar F$ such that isometry $j$ induces the imbedding of
the group monomorphism of the group of isometries of the space $F$ to the group of isometries of
space $\bar F$ and each partial isometry of $F$ can be extended up to global isometry in $\bar F$.
The fact that theorem $G$, is true was announced in 2005 by author without proof, and was proved by
S.Solecki in \cite{Sol} (see also \cite{P,P1}) based on the previous complicate results of other
authors. The theorem is generalization of the Hrushevski's theorem about the globalization of the
partial isomorphisms of finite graphs. We intend to give a constructive proof in the same spirit
for metric spaces elsewhere. We also give the strengthening of homogeneity of Urysohn space and in
the last paragraph we gave a short survey of the various constructions of Urysohn space including
the new proof of the construction of shift invariant universal distance matrix from \cite{CV}}.

\section{Introduction. About Urysohn space.}

  The first time when I have heard about Urysohn space was in 90-th when my coauthor V.Berstovsky
  (during the preparation of our paper \cite{BV}) told me that he had read in his student time
  the popular article in a Journal (later I found out that it was article by P.S.Alexandrov in
  the Russian Popular Journal "Quant"  $\#8$,1974 devoted to 50-th anniversary of P.S.Urysohn death)
  the short mentioning of the Urysohn space. We quoted in our paper the Urysohn article \cite{U}
  and the link between Urysohn space and Gromov distance between metric spaces.
  I was amazed by the beauty of the Urysohn's construction in \cite{U} (it was republished
  in the Russian volume of his collected works) and was very surprised that
  during almost 70 years only about 10 papers in mathematical literature concerned of that remarkable space.
  Then I tried to connect Urysohn construction 1)with notion of randomness in the theory of metric spaces
  and other random objects which about I had interested with, and 2)with dynamical systems. I have written
  about this in the short paper \cite{V^0}. I tried to popularize the Urysohn space among my colleagues
  and students, and suggested to M.Gromov to include the information about it to the 2-nd edition of his
  remarkable book \cite{G}; he indeed put very interesting remarks on this. I had remarked that
  in the space of all metrics on the naturals numbers $\Bbb N$ equipped with usual (weak) topology
  the set of those metrics under which completion of $\Bbb N$ is Urysohn space (up to isometry) - is everywhere
  dense $G_{\delta}$ set. Later in 2000 in Barcelona on 3-d European Congress of Mathematics I have heard
  the talk by P.Cameron on the universal graphs (I knew nothing about universal graph before that)
  and told him that random graphs looks like simple variant (baby model) of the random metric space which is in a sense
  Urysohn space. Some papers by P.Cameron was very tightly connected with Urysohn space but it was
  understood completely only later when we had constructed in \cite{CP} the Toeplitz universal distance
  matrix; this means that Urysohn space can have an isometry with dense orbit and consequently
  this space has a (nonunique) structure of monothetic (abelian) group with invariant metric
  (see the last paragraph of this paper).

    My impression that the bunch of the problems connected with Urysohn space is extremely wide and
  the various properties of it give the material for the interesting generalization. One of the fresh example -
  the notion of linear rigid space which is introduced in basing on the remarkable observation of
  R.Holms \cite{Ho}: Urysohn space has only one up to isometry dense embedding to the Banach space.
  It is interesting that many problems about finite metric space had been arisen after analysis of
  Urysohn space. Another class of the problems concern of the detail structure of the Urysohn metric and
  transparent model of the space. Finally the connection between various measures on the Urysohn space
  from one side and random matrices and theory random processes from other side looks very intriguing.
  Now we have very interesting volume on this topic, which is the result of the hard work of
  organizers of the conference on Urysohn space in Beer-Sheva at 2005 especially -  V.Pestov and A.Leiderman.

\section{Two theorems}
  In this paper we consider new kind of problems connected with general theory of metric spaces.
  Let us formulate two notions:{\it  approximation and globalization}.
  Let $\Bbb U$ be the Urysohn universal metric space, (see f.e.\cite{V})
  and let $ISO(\Bbb U)$ be the group of all isometries of $\Bbb U$:

\smallskip\noindent\textbf{Theorem A (Finite approximation of the group of isometries).}
   {\it The group $ISO(\Bbb U)$ contains everywhere dense locally finite subgroup.
   The same conclusion is true for the group of isometries of the rational universal space
   ${\Bbb Q\Bbb U}$.}

\smallskip

   By a partial isometry of any metric space $F$ we mean an
   isometry between two subsets of the space $F$. It is convenient
   to consider also "empty isometry" (= isometry with empty domain).
   Suppose a space $F$ is finite; each partial isometry is one-to-one map
   of a subset of $F$ to other subset of $F$ which preserve the metric
   It is easy to see that the set of all partial isometries
   generates with respect to superposition as a product
   {\it the inverse semigroup} which we call semigroup of partial isometries
    - with zero - empty isometry, and unity - identity map and denote it as
   $PISO(F)$. It is clear that this is a subsemigroup of symmetric inverse
   semigroup $SS_F$ of all one-to-one maps of the
   subsets of $F$ (see \cite{CP}). The group of all isometries $ISO(F)$ is a maximal subgroup
   of the semigroup $PISO(F)$ and is a subgroup of symmetric group $S_F$.
   For each subset $K\subset F$ we can consider the group of
   isometries $ISO(K)$ of subspace $K$ which is a partial subgroup
   of $PISO(F)$ in the natural sense.
   Now we can formulate the second theorem concerning
   to the important property of the finite metric spaces.

\smallskip\noindent
   \textbf{Theorem G (The group extension of partial isometries).}
   {\it  For every finite metric space $(F, \rho)$ there are exist
   a finite metric space $(F', \rho')$, and an isometric embedding $I:(F,\rho) \to (F',\rho')$:
    $\rho'(Ix,Iy)=\rho(x,y)$ such that

   {\rm 1.} (Extensions of partial isometries) each partial (in particular -global)
    isometry $h\in PISO(F)$ has an extension $T(h)$ which is the isometry of $F'$,
    e.g. $T(h)\in ISO(F')$ and
                  $$I(hx)=[T(h)](x);$$

   {\rm 2.}(Equiinvariance on the subgroups) those extensions have properties:
   for each $K\subset F$ (in particular for $K=F$) the restriction
   of $T$ on the group $I^*_K$ is a homomorphism of the group $ISO(K) \to ISO(F')$:
   $$T(g_1 g_2))=T(g_1)T(g_2), \forall g_1,g_2\in  ISO(K)$$}
   \smallskip

   In short, Theorem G asserts that we can
   simultaneously extend all partial isometries of a metric space
   $F$ and the group of its global isometries,  preserving the
   group structure of the partial subgroups of the isometries
   and in particulary of the group of all isometries of metric space $F$.
   Remark, that in general there are no any algebraic relations between
   extensions of isometries of the different subsets $K$, for
   example, $T$ is not homomorphism of the semigroup $PISO$ of all partial isometries
   to the group $ISO$. Our extensions are not unique and it is doubtful
   that the canonical  extension does exist.
   The theorem $G$ is true for the rational metric spaces (with rational distances)
   and of course it is enough to prove it for that case. Theorem A
   for the group of automorphisms of the universal graph was proved
   in \cite{MB,H,HL} (after \cite{Hr92}). The author in 2004 put the
   same - question about the existence of dense locally finite subgroup of
   the group $ISO(\Bbb U)$; and Professor P.Cameron attract my attention to those papers.

   Theorem B can be considered as a direct generalization of Hrushevski's \cite{Hr92}
   theorem about extensions of partial isomorphisms of the graphs
   with additional claim about equivariance of imbedding.
   This paper was written on 2005 and after that time several important
   publications were devoted to the same question - the paper \cite{Sol}
   in which the variant of the theorem $B$ was proved with completely
   different algebraic method and the paper \cite{P,P1} in which very useful
   survey and the simplest proofs in the same spirit as in \cite{Sol}
   was done. The elementary proof of the theorem in the spirit of
   the Hrushevski's  proof of globalization theorem for graphs will
   be done elsewhere.

   Now we start with some detailed elaboration of the main property
   of Urysohn space.

   \section{The strengthening of homogeneity in Urysohn space}

   The next theorem gives a refinement of the property of (ultra)homogeneity
   of the group $ISO(\Bbb U)$ of isometries of the Urysohn space $\Bbb U$.
   The condition of homogeneity asserts in particular that
   for any isometry $T$ of the finite subset $F \subset \Bbb Q\Bbb U$
   there exist an extension of $T$ to global isometry of $\Bbb Q\Bbb U$.
   We refine this assertion by an additional claim:

   \begin{theorem}
   {For any finite subset $F \subset \Bbb QU$
   there exists an isomorphic embedding $j:G \to ISO(\Bbb Q\Bbb U)$
   of the group $G = ISO(F)$
   of isometries of $F$  such that $(jg)(f)=g(f)$
   for all $g\in G$, $f\in F$. Consequently, the space $\Bbb Q\Bbb U$
   is a countable union of finite orbits of the group $jG$.}
  \end{theorem}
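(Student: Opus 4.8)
The plan is to combine the globalization result (Theorem G) with the ultrahomogeneity of the rational Urysohn space $\Bbb Q\Bbb U$ via a back-and-forth / direct-limit construction. First, given the finite subset $F \subset \Bbb Q\Bbb U$, apply Theorem G to obtain a finite rational metric space $F' \supset F$ (via the embedding $I$) together with the extension map $T$, so that in particular the restriction of $T$ to $ISO(F) = ISO(I(F))$ is a group monomorphism $ISO(F) \to ISO(F')$ extending each isometry of $F$ to one of $F'$. Since $\Bbb Q\Bbb U$ is ultrahomogeneous and universal for finite rational metric spaces, I can realize $F'$ as a subset of $\Bbb Q\Bbb U$ in such a way that the copy of $F$ inside it coincides with the original $F$; this uses the extension property of $\Bbb Q\Bbb U$ applied to the inclusion $F \hookrightarrow F'$. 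At this stage every $g \in G = ISO(F)$ has been extended to an isometry of a finite set $F_1 := F' \subset \Bbb Q\Bbb U$, and the extension is a homomorphism $G \to ISO(F_1)$.

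Next I iterate. Setting $F_0 = F$ and having built $F_n \subset \Bbb Q\Bbb U$ with a homomorphic lift $G \to ISO(F_n)$ whose isometries restrict on $F_0$ to the given ones, I apply Theorem G again to the pair $F_n \subset ISO(\cdot)$: but there is a subtlety — I need the lift at stage $n+1$ to \emph{extend} the lift at stage $n$, not merely agree on $F_0$. For this I should apply the equivariance clause (clause 2) of Theorem G with $K = F_n$ (or rather with $K$ the image of $G$ under the stage-$n$ homomorphism, viewed inside $ISO(F_n)$), which guarantees that the new extension $T_{n+1}$ restricted to the already-constructed group of isometries of $F_n$ is again a homomorphism. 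Composing, I obtain $G \to ISO(F_n) \to ISO(F_{n+1})$, a chain of homomorphisms each extending the previous, and I realize $F_{n+1}$ inside $\Bbb Q\Bbb U$ extending the already-placed copy of $F_n$ by ultrahomogeneity. To make the union $\bigcup_n F_n$ exhaust all of $\Bbb Q\Bbb U$, I interleave an enumeration $\{x_1, x_2, \dots\}$ of $\Bbb Q\Bbb U$: at stage $n$, before applying Theorem G, first enlarge $F_n$ to include $x_n$ (this is an isometric extension inside $\Bbb Q\Bbb U$, harmless for the next application of Theorem G since the group of isometries only needs to be extended, and a larger ambient finite set is allowed). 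Passing to the direct limit, $\bigcup_n F_n = \Bbb Q\Bbb U$, and the compatible homomorphisms assemble into a single homomorphism $j \colon G \to \mathrm{Sym}(\Bbb Q\Bbb U)$ whose image consists of isometries — hence $j \colon G \to ISO(\Bbb Q\Bbb U)$ — with $(jg)(f) = g(f)$ for $f \in F$. Injectivity of $j$ is automatic since already its composite with restriction to $F$ is faithful. The final sentence of the theorem is then immediate: each $F_n$ is finite and $jG$-... well, not invariant, but each $jG$-orbit of a point lies in some $F_m$ for all large $m$ once the construction has absorbed that point and stabilized the action on it — more precisely, once $x_k \in F_n$ and $n$ is large enough that the stage-$n$ group already acts on $F_n$, the orbit $jG \cdot x_k \subseteq F_n$ is finite, so $\Bbb Q\Bbb U$ is a countable union of finite $jG$-orbits.

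The step I expect to be the main obstacle is ensuring \emph{coherence of the lifts across stages}: Theorem G as stated produces, for the pair $F_n \subset F_{n+1}$, an extension map $T_{n+1}$ defined on all of $PISO(F_n)$, but it is emphatically \emph{not} a semigroup homomorphism, and a priori the extension it assigns to a global isometry $g$ of $F_n$ need not be the one I want — namely the one that restricts on $F_{n-1}$ to the extension chosen at the previous stage. The resolution is to apply clause 2 with $K$ equal to the full group $G_n := $ (image of $G$ in $ISO(F_n)$) \emph{together with} the requirement that this group already sits inside $ISO(F_n)$ as an abstract copy of $G$; then $T_{n+1}\!\restriction_{G_n}$ is a homomorphism $G_n \to ISO(F_{n+1})$, and composing $G \xrightarrow{\sim} G_n \xrightarrow{T_{n+1}} ISO(F_{n+1})$ gives the stage-$(n+1)$ lift, which by construction extends the stage-$n$ lift because $T_{n+1}$ is an extension map (clause 1) — i.e. $I_{n+1}(T_{n+1}(h)x) = h x$ forces agreement on $F_n \supseteq F_{n-1}$. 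A secondary technical point is matching the abstract $F'$ from Theorem G with an actual subset of $\Bbb Q\Bbb U$ extending the previously fixed finite configuration; this is exactly the finite-extension property characterizing $\Bbb Q\Bbb U$ among rational metric spaces and requires only that all distances involved be rational, which Theorem G (stated for rational spaces) guarantees.
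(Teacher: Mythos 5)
Your argument is correct as far as it goes, but it takes a genuinely different route from the paper, and the difference matters. You derive Theorem 1 from Theorem G, applied iteratively to $F_n\cup\{x_{n+1}\}$ with $K=F_n$ in the equivariance clause, plus the finite extension property of $\mathbb{Q}\mathbb{U}$; your coherence mechanism is sound: clause 1 forces $T_{n+1}(\phi_n(g))$ to agree with $\phi_n(g)$ on $F_n$, clause 2 with $K=F_n$ makes each lift a homomorphism (restricting to the subgroup $\phi_n(G)\leq ISO(F_n)$ is harmless), each $F_n$ is invariant under the limit group, so all orbits are finite. The paper, however, proves Theorem 1 directly and elementarily, with no appeal to Theorem G: given $x\notin F$ it takes the $G$-orbit of the distance vector $a=(r(x,f))_{f\in F}$, identified with the cosets $G/G_0$ of its stabilizer, puts $F'=F\cup\{x_0,\dots,x_k\}$, and defines the new distances by the sup-norm $r(x,xg^{-1})=\max_{f\in F}|r(x,f)-r(x,gf)|$; the triangle inequalities for triples meeting $F$ follow from the max/min inequality proved there, those for triples of new points from the norm's triangle inequality and $G$-invariance, and then $F'$ is embedded into $\mathbb{Q}\mathbb{U}$ over $F$ and the step is repeated along an enumeration of $\mathbb{Q}\mathbb{U}\setminus F$. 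What the paper's route buys: (i) self-containedness --- Theorem G is not proved in this paper, only cited (Solecki), so your proof rests on a much deeper external theorem where an explicit two-page construction suffices; (ii) non-circularity within the paper's architecture --- Theorem 1 is used in the proof that $A\Rightarrow G$, so deriving Theorem 1 from G would make that implication assume what it is meant to establish; (iii) you also quietly need the refinement (true, but not literally contained in the statement of Theorem G) that a rational $F$ admits a rational $F'$, which the paper only asserts in passing. What your route buys: it exhibits Theorem 1 as a formal consequence of G together with ultrahomogeneity, by essentially the same bookkeeping the paper uses in its $G\Rightarrow A$ direction, and it would generalize mechanically to any class where a globalization theorem of type G is available.
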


  \begin{proof}
   Denote by $r(\cdot,\cdot)$ the metric on the rational Urysohn
   space $\Bbb Q\Bbb U$. Our goal is to define a group extension of
   isometry group $G=ISO(F)$ onto whole space $\Bbb Q \Bbb U$.
   For this we will define $G$-orbit for each point
   $x \in \Bbb Q \Bbb U \setminus F $ and in this way to extend
   the action of the group $G$ to subgroup of isometries
   of $\Bbb Q\Bbb U$.

   Choose an arbitrary point $x \in \Bbb Q\Bbb U\setminus F$ and denote the
   space  $F_1=F\bigcup \{x\}$ as a separate metric space.
   So we have the following claim which is actually the simplest and the
   main case of the proof of the theorem G:
   Consider the finite metric space $F_1=F\bigcup \{x\}$, to find isometric extension
   $F'$ of the space $F_1$ and extend onto $F'$ the group of isometry $ISO(F)$ of $F$.
   If we can define such space $F'$ then, using the universality of $\Bbb Q\Bbb U$
   find the embedding $j$ of the space $F'$ to $\Bbb Q\Bbb U$ with
   condition: $j|_F=id$. By induction we can join consequently the points of
    $x \in \Bbb Q \Bbb U \setminus F $ and extent the group of isometry
   of the set $F$ onto whole space  $\Bbb Q\Bbb U$. Thus we reduce the problem
   to the question about finite metric spaces.

   Denote the distance vector of the point $x$ by  $a(x)\equiv a =\{a_f,\, f \in F\}$
   where $a_f=r(x,f),\, f \in F \}$ and consider its $G$-orbit in the space of
   rational vectors: $ga=\{a_{gf}=r(x,gf),  f \in F\}; g\in G$. Suppose $G_0$ is a
   stable subgroup of group $G$ of the vector $a$, so
   $G_0=\{h\in G: \forall f \in F, r(x,hf)=r(x,f)$, (in another words $G_0$ is
   intersection $ISO(F)\bigcap ISO(F\cup \{x\}$). Then we can be identified
   the orbit of $a$ with the right cosets  $G/G_0$. We will denote
   the classes (or the orbit of the vector $a$) as $x\equiv x_0\thicksim a$
   and $x_1, \dots x_k, k=|G/G_0|$ with the points of the
   The needed space $F'$ will be the union $F \bigcup \{x_0=x,x_1 \dots x_k\}=
   F_1 \bigcup \{x_1,\dots x_k$. Define the action of the group $G$ on
   $F'=F \setminus \{x_0,x_1, \dots x_k\}$ as natural action on the right classes
   $G/G_0$. This is the extension of the action of the group $G$ onto space $F'$
   Our task is to extend the metric and to prove that the extension action is isometric.

   Remark, that by definition, $r(xg,f)=r(x,g^{-1}f)$, and
   $r(xg,gf))=r(x, gg^{-1}f)=r(x,f)$, for all $f \in F$. So it is
   enough to define the distances between points $x_s$ - $r(x_i,x_j)$, or in order to
   save isometric action we must have $r(xg,xh)=r(x,hg^{-1}x$,
   so we have to define only distances between $x_0 \equiv x$ and points
   $x_s, s=1 \dots k$.
   The distances $r(x, x_s), s=1\dots $ must satisfy to triangle inequality for
   any triple of points of type ($x, x_s, f)$, thus we have the following
   condition:
   $\forall f\in F \quad |r(x,f)-r(xg,f)|\leq r(x,xg)\leq r(x,f)+r(xg,f)$,
   or because $r(xg,f)=r(x,g^{-1}f)$ for some $g \in G$, we have

   $$|r(x,f)-r(x,g^{-1}f)|\leq r(x,xg) \leq r(x,f)+r(x,g^{-1}f)$$
   for all $f \in F$.
   In order to have such possibility we must prove the following inequality:
   $$max_{f \in F}|r(x,f)-r(x,gf)|\leq min_{f \in F} r(x,f)+r(x,gf).$$

   But this is true: choose any $f_1,f_2$, then

         $$|r(x,f_1)-r(x,gf_1)|\leq r(x,f_2)+r(x,gf_2),$$
   indeed
           $r(x,f_1)\leq r(x,f_2)+r(f_1,f_2)\leq
   r(x,f_2)+r(gf_1,gf_2)\leq r(x,f_2)+r(x,gf_1)+r(x,gf_2)$.

   Consider the maximum of the left side over $f_1$ and minimum of the right side
   over $f_2$ of the previous inequality, then we obtain needed inequality.
   So, the distance can be defined correctly. For example, we can define
   the distances $r(x, x_s), s=1\dots $, as the left side of the inequalities:
   $$r(x,xg^{-1})\equiv max_{f\in F} |r(x,f)-r(x,gf)|.$$
   It is evident that by this the definition the distance is the uniform
   norm in $C(F)={\Bbb R}^{|F|}$ of the differences:
   $r(x,xg^{-1})=||r(x,\cdot)-r(x,g\cdot)||$. From this we can conclude immediately
   that triangle inequality is true for all triples of type
   $(x,gx,hx)$ because it follows from triangle inequality for the norm,
   and from the invariance of the norm with respect to action of group $G$:

   $$r(x,xg^{-1})+r(x,xh^{-1})=||r(x,\cdot)-r(x,g\cdot)||+||r(x,\cdot)-r(x,h\cdot)||
    \geq ||r(xg,\cdot)-r(xh,\cdot)||=$$
   $$ =||r(x,\cdot)-r(xhg^{-1},\cdot)||=r(x,xgh^{-1}).$$
   So we have checked triangle inequality for all types of triples,
   and the invariance of the extended metric $r$ under the action of the
   group G.
\end{proof}

   The way to define the distance which we use here is canonical but
   not unique and even not convenient for the general case.

   The next corollary immediately follows from the theorem and
   universality of $\Bbb U$.

\begin{corollary}
  {For each finite metric space $F$ there exist an equivariant
  isometric imbedding $i$ of $F$ to the space $\Bbb U$, which means
  that there exists imbedding $j$ of the group $ISO(F)$ to the group
  $ISO (\Bbb U)$ such that $(i(gf)=j(g)if)$ for each $f \in F$ and $g
  \in ISO(F)$.}
\end{corollary}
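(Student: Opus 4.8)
The plan is to reduce the statement to Theorem~1. Since $F$ is a finite metric space, the universality of $\Bbb U$ supplies an isometric embedding $i_0\colon F\to\Bbb U$; put $F_0=i_0(F)$, a finite subset of $\Bbb U$. Conjugation by $i_0$ is a group isomorphism $c\colon ISO(F)\to ISO(F_0)$, $c(g)=i_0\,g\,i_0^{-1}$. Hence it is enough to produce an injective homomorphism $j_0\colon ISO(F_0)\to ISO(\Bbb U)$ with $j_0(g)|_{F_0}=g$ for every $g$; one then sets $i=i_0$ and $j=j_0\circ c$, and the required identity is a one-line verification:
\[
j(g)(i(f))=j_0(c(g))(i_0(f))=c(g)(i_0(f))=i_0(g(f))=i(g(f))
\]
for all $g\in ISO(F)$ and $f\in F$, using that $i_0(f)\in F_0$ and that $j_0(c(g))$ restricts to $c(g)$ on $F_0$. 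Injectivity and multiplicativity of $j$ follow from those of $j_0$ and $c$.

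To construct $j_0$, I would rerun the construction from the proof of Theorem~1, noting that it used only the universality and the finite (one-point) homogeneity of the ambient space, and both hold for $\Bbb U$ exactly as for $\Bbb Q\Bbb U$. Given a finite metric space $E$ carrying an isometric action of $G=ISO(F_0)$ that extends the action on $F_0$, and a new point $x$, one forms $E\cup\{x_0,\dots,x_k\}$ in which the $x_s$ realise the $G$-orbit of the distance vector of $x$, with the mutual distances $r(x_i,x_j)$ defined by the uniform-norm formula $r(x,xg^{-1})=\|r(x,\cdot)-r(x,g\cdot)\|$ of Theorem~1. That formula is precisely what makes the triangle inequality hold for the new triples, so $G$ acts isometrically on the enlarged space; by finite homogeneity of $\Bbb U$ this enlarged space re-embeds into $\Bbb U$ fixing $F_0$ pointwise. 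This is the single engine needed.

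The one genuine difference from Theorem~1 is that $\Bbb U$ is not countable, so the ``adjoin one point at a time'' induction cannot simply run through all of the space. Instead I would fix a countable dense $S\subseteq\Bbb U$ with $F_0\subseteq S$ and build, by a back-and-forth recursion dovetailed over all finite subsets, all rational one-point extension types, and all tolerances $1/m$, an increasing chain of countable metric spaces $F_0=Y_0\subseteq Y_1\subseteq\cdots$, each with an isometric $G$-action extending the previous one (produced by the one-point step of the previous paragraph applied with $E=Y_n$), arranged so that the union $Y=\bigcup_nY_n$ has the approximate one-point extension property; then the completion $\overline Y$ is isometric to $\Bbb U$, and after composing with an isometry of $\Bbb U$ we may assume the copy of $F_0$ inside $\overline Y$ is exactly $F_0$. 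Since $G$ acts on $Y$ by isometries and $Y$ is dense in $\overline Y\cong\Bbb U$, each $g\in G$ extends uniquely and functorially by continuity to an isometry of $\Bbb U$; this gives the homomorphism $j_0\colon G\to ISO(\Bbb U)$ with $j_0(g)|_{F_0}=g$, completing the argument.

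The main point requiring care is this back-and-forth construction of $Y$: one must simultaneously keep the $G$-action defined and isometric at every stage — handled exactly by the uniform-norm distance formula of Theorem~1, whose triangle-inequality check carries over verbatim with $Y_n$ in place of $F$ — and guarantee that the completion of $Y$ is the Urysohn space, which is the standard characterisation of countable metric spaces with Urysohn completion. Neither ingredient is difficult; interleaving them while preserving equivariance is the only place where something must be checked. One could also bypass the back-and-forth entirely by invoking Theorem~G itself: iterate $F\subset F'\subset F''\subset\cdots$, take the union to obtain a countable $G$-space in which every partial isometry extends, and embed it into $\Bbb U$ — but for the present corollary the single-point construction of Theorem~1 already suffices.
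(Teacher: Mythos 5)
Your proposal is correct and follows essentially the paper's route: the paper derives the corollary directly from Theorem~1 together with universality of $\Bbb U$ (with the continuity of the construction, noted in the subsequent remark, supplying the passage from a countable dense equivariant subset to the completion), and your argument is a worked-out version of exactly that — rerun the one-point orbit extension with the sup-norm distance formula over a countable dense set and extend the finite group action to $\Bbb U$ by continuity. The extra care you take about uncountability of $\Bbb U$ and about the completion being Urysohn fills in details the paper leaves implicit, but the underlying mechanism is the same.
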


\smallskip\noindent
  {\bf  Remark.} The above theorem and the corollary allow to give
   new proof of the theorem by V.~V.~Uspensky \cite{U}: each Polish
   space $F$ can be embedded into the universal space $\Bbb U$
   in an equivariant way: $I:(F, \rho')\to (\Bbb U, r)$, $I^*:ISO (F) \to ISO(\Bbb U)$,
   where $I$ is an isometry,  $I^*$ is a monomorphism of groups, and
    $$I(gf)=I^*(g)I(f), \quad f \in F,\; g \in ISO(F).$$

  Indeed, the proof of the theorem 1 include the claim of Uspensky'
  theorem for finite metric spaces; in order to extend it to the
  arbitrary metric spaces it is enough to mention that
  our construction is continuous in the natural sense: two
  closed isometries of the space $F$ imbedded to closed isometries
  of $\Bbb U$.
\section{Proof of equivalence theorems A and G}

    First we deduce Theorem G using the Approximation Theorem A.

\begin{theorem} \textbf{$A\implies G$}.
\end{theorem}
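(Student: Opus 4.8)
The plan is to derive Theorem G from Theorem A by transporting a finite metric space into $\Bbb U$ (or $\Bbb Q\Bbb U$), approximating the relevant isometries inside the dense locally finite subgroup, and then taking the orbit of $F$ under a suitable finite subgroup as the space $F'$. The point is that local finiteness gives us, for any finite configuration, an honest finite group of global isometries of $\Bbb U$ that does everything we need on that configuration, and the orbit of a finite set under a finite group is finite.

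**First I would** fix a finite metric space $(F,\rho)$; since Theorem G need only be proved for rational metric spaces, assume the distances are rational and use the corollary above (equivariant embedding of finite metric spaces) to embed $F$ isometrically into $\Bbb Q\Bbb U$, identifying $F$ with its image. By Theorem A pick an everywhere dense locally finite subgroup $\Gamma \le ISO(\Bbb Q\Bbb U)$. Now I want each partial isometry $h \in PISO(F)$ to be realized, on $F$, by some element of $\Gamma$. For a single partial isometry $h: A \to B$ with $A,B \subseteq F$, ultrahomogeneity of $\Bbb Q\Bbb U$ gives a global isometry extending $h$; since $\Gamma$ is dense and $F$ is finite (so the set of isometries agreeing with a given one on $F$ is open in the topology of pointwise convergence), there is $\gamma_h \in \Gamma$ with $\gamma_h|_A = h$. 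Likewise, using the equivariant embedding, the subgroup $ISO(F)$ already acts on $F$ by restrictions of global isometries, and by density we may choose for each $g \in ISO(F)$ an element $\tilde g \in \Gamma$ with $\tilde g|_F = g$; more carefully, to get the homomorphism property on the subgroups $ISO(K)$ one takes the subgroup of $ISO(\Bbb Q\Bbb U)$ generated by the chosen approximants and exploits that its action on $F$ factors through the finite groups $ISO(K)$. Let $\Gamma_0 \le \Gamma$ be the finite subgroup generated by all the finitely many chosen elements $\gamma_h$ (over all $h \in PISO(F)$) together with a set of approximants realizing $ISO(F)$; finiteness of $\Gamma_0$ is exactly local finiteness of $\Gamma$.

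**Then** I would set $F' := \Gamma_0 \cdot F \subseteq \Bbb Q\Bbb U$, the $\Gamma_0$-orbit of $F$, with the induced metric $\rho'$, and let $I: F \hookrightarrow F'$ be the inclusion; $F'$ is finite because $\Gamma_0$ and $F$ are. Each partial isometry $h \in PISO(F)$ extends to the global isometry $\gamma_h$ of $\Bbb Q\Bbb U$, which preserves $F' = \Gamma_0 F$ (as $\gamma_h \in \Gamma_0$), so $T(h) := \gamma_h|_{F'} \in ISO(F')$ and $I(hx) = [T(h)](x)$ for $x$ in the domain of $h$, giving clause 1. For clause 2, restrict $T$ to $ISO(K)$ for $K \subseteq F$: one must arrange that the assignment $g \mapsto T(g)$ on $ISO(K)$ is multiplicative. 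The clean way is to not choose approximants independently but to fix a single section — choose approximants to the generators of $ISO(F)$ and close up — so that the elements of $\Gamma_0$ chosen to represent $ISO(K) \le ISO(F)$ form a subgroup mapping homomorphically onto $ISO(K)$; since elements of $\Gamma_0$ act globally and isometrically and preserve $F'$, composition in $ISO(F')$ matches composition in $ISO(K)$, and equivariance $I(gf) = [T(g)](f)$ holds by construction.

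**The main obstacle** is clause 2: getting the \emph{simultaneous} compatibility of the group homomorphisms on all the subgroups $ISO(K)$, $K\subseteq F$, while also extending every partial isometry. Approximating each partial isometry separately is easy, but these choices need not respect any group law; the fix is to first build a single monomorphism $ISO(F) \to \Gamma$ lifting the action on $F$ (this is where the equivariant embedding corollary and density, plus a little care that a finite group of isometries of $\Bbb Q\Bbb U$ extends to a copy inside a locally finite dense subgroup, are used), note that its restriction to each $ISO(K)$ is automatically a homomorphism, and only \emph{then} adjoin the extra approximants for the non-invertible or ``cross-subset'' partial isometries, all inside one finite subgroup $\Gamma_0$. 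One should also check the compatibility of $\Gamma_0$'s action on $F'$ with the abstract requirement $\rho'(Ix,Iy)=\rho(x,y)$, which is immediate since $I$ is the inclusion of a subspace of $\Bbb Q\Bbb U$. I expect verifying that the finite group from local finiteness can be taken to \emph{contain} a homomorphic lift of $ISO(F)$ — rather than merely many unrelated approximants — to be the step requiring the most care.
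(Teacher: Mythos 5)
Your proposal follows essentially the same route as the paper's proof: reduce to the rational case, embed $F$ (equivariantly, via Theorem 1 and its corollary) into $\Bbb Q\Bbb U$, use the dense locally finite subgroup from Theorem A together with ultrahomogeneity to capture extensions of all the finitely many partial isometries of $F$ inside one finite subgroup, and take a finite invariant set containing $F$ as $F'$. The only cosmetic difference is that the paper works with a pre-chosen exhaustion of $\Bbb Q\Bbb U$ by $G_n$-invariant finite sets $F_n$ and puts $F'=F_m$, while you take the orbit $\Gamma_0\cdot F$; these are the same device.

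Two caveats. First, your claim that a homomorphic lift of $ISO(F)$ ``automatically'' gives the homomorphisms on each $ISO(K)$ by restriction is wrong as stated: for a proper subset $K\subset F$ the group $ISO(K)$ is \emph{not} a subgroup of $ISO(F)$ (an isometry of $K$ need not extend to an isometry of $F$), so clause 2 demands a separate homomorphic lift of each $ISO(K)$, $K\subseteq F$. This is repairable with your own tools, since the groups $ISO(K)$ for distinct $K$ are disjoint inside $PISO(F)$, so one applies the equivariant-embedding corollary to each $K$ independently and adjoins all the resulting finite groups to $\Gamma_0$ --- but it has to be said, and the paper is equally terse here. Second, the step you flag as needing the most care --- that the homomorphic lift of $ISO(F)$ (or $ISO(K)$) can be placed \emph{inside} the locally finite dense subgroup, rather than merely approximated element by element --- is indeed the crux: density only shows that the setwise stabilizer of $K$ in $\Gamma$ surjects onto $ISO(K)$ by restriction, and such a surjection need not split, so some further argument or a strengthened form of Theorem A is required. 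The paper passes over exactly this point with the unproved assertion that ``we can assume that the group $ISO(F_n)$ is contained in $G_n$'', so your write-up is at the same level of completeness as the paper's own argument there; you have correctly located where the real work lies.
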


   \begin{proof}
   {It is enough to prove the theorem G for rational metric spaces:
   suppose the Theorem A is proved for the universal space $\Bbb Q\Bbb U$
   over field $\Bbb Q$ of rational numbers.

   The assertion of Theorem A means that there exist increasing sequences of finite
   subgroups $G_n \in ISO (\Bbb Q\Bbb U)$ with union which is dense in
   the group: $ISO(\Bbb Q\Bbb U)  = \textbf{Cl} \cup_{n=1}^{\infty}G_n$.
   For each $n$ choose a set $E_n$ of points $x \in (\Bbb Q\Bbb U)$ and consider the set $F_n$
   which is the $G_n$-orbits of $E_n$ such that the action of $G$ on $F_n$ is
   faithful (each nonidentical element of $G_n$ has nonidentity action).
   We can assume that the sequence of the sets $F_n$ increases.
   Because of density of $\cup_{n=1}^{\infty}G_n$ in the group $ISO(\Bbb Q\Bbb U)$
   it is clear that $\cup F_n=\Bbb Q\Bbb U$ and therefore for any $g \in ISO(\Bbb Q\Bbb U)$
   and natural $N$ there exists $n(N)$ and $g' \in G_N$ such that $g(f)=g'(f)$ for all
   $f \in F_n$. In particular we can assume that the group $ISO(F_n)$ contains in $G_n$.

      Let us start with an arbitrary finite metric space $F$; since of universality
   we can embed the space $F$ isometrically into the universal space
   $\Bbb Q \Bbb U$, and regard $F$ as a subset of $\Bbb Q\Bbb U$. So $F \subset F_n$
   for some $n$. By theorem 1 we can embed $F$ in such way that $ISO(F)$ is a subgroup
   of $ISO(\Bbb Q\Bbb U)$.
   Each partial isometry $T$ of $F$ are partial isometries of $F_n$ and by the main
   property of universal space each partial isometry can be extended to the global
   isometry $\bar T$ of $\Bbb Q\Bbb U$. Because of finiteness of $F$ we can choose
   $m$ such that extension $\bar T$ for all partial isometries $T$ acts on $F_m$
   as some elements of the group $G_m$. So on $F_m$ we have isometrical extensions of all
   partial isometries of $F$ and in the same time the group of isometries
   of $F$ is subgroup of $G_m$. So we find a finite metric space $F_m$ which
   satisfies to all conditions of the theorem G.}
   \end{proof}

   Now we deduce Theorem A (Approximation Theorem) using Theorem G.

\begin{theorem} \textbf{$G \implies A$.}
\end{theorem}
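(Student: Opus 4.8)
The goal is to build an increasing sequence of finite subgroups $G_n \subset ISO(\mathbb{Q}\mathbb{U})$ whose union is dense. The plan is to use Theorem G as a "closure under extensions" device and iterate it together with the universality and ultrahomogeneity of $\mathbb{Q}\mathbb{U}$. First I would fix a countable dense set $\{y_1, y_2, \dots\}$ in $\mathbb{Q}\mathbb{U}$ and enumerate a countable family of "test data": finite subsets $A \subset \mathbb{Q}\mathbb{U}$ (from among the $y_i$) together with partial isometries of $A$. Since a partial isometry of a finite set is determined by finitely many rational numbers, and there are countably many finite subsets of $\{y_i\}$, this family is countable; fix an enumeration $(A_k, \varphi_k)_{k \ge 1}$ in which each pair recurs infinitely often.

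Next I would construct the chain $F_1 \subset F_2 \subset \cdots$ of finite subspaces of $\mathbb{Q}\mathbb{U}$ and the groups $G_n = ISO(F_n)$ (viewed inside $ISO(\mathbb{Q}\mathbb{U})$ via Theorem 1) by recursion. At stage $n$, start from the finite space $F_{n-1} \cup \{y_n\}$; enlarge it to include $A_n$; then apply Theorem G to this finite space to get a finite extension $\bar F$ in which every partial isometry — in particular $\varphi_n$, if its domain sits inside the current space — extends to a global isometry, and in which $ISO$ of the current space embeds as a subgroup of $ISO(\bar F)$ respecting the group law. Embed $\bar F$ isometrically back into $\mathbb{Q}\mathbb{U}$ using universality, and embed $ISO(\bar F)$ into $ISO(\mathbb{Q}\mathbb{U})$ using the equivariant embedding from Theorem 1; by ultrahomogeneity of $\mathbb{Q}\mathbb{U}$ we may arrange this so that the copy of $\bar F$ contains $F_{n-1} \cup \{y_n\} \cup A_n$ pointwise and the embedded group restricts to the previously built $G_{n-1}$. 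Set $F_n = \bar F$ and $G_n = ISO(F_n)$. The key point making the recursion legal is part 2 of Theorem G (equivariance on subgroups), which guarantees $G_{n-1} \hookrightarrow G_n$ as genuine subgroups, so $\bigcup_n G_n$ is an honest locally finite group; and part 1 guarantees that partial isometries get globalized one step later.

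Finally I would check density. Given $g \in ISO(\mathbb{Q}\mathbb{U})$ and a finite set $S \subset \mathbb{Q}\mathbb{U}$ on which we want to approximate $g$, enlarge $S$ so it lies in some $F_{n_0}$; then $g|_S$ is a partial isometry of $\mathbb{Q}\mathbb{U}$ whose graph, being finite, equals $\varphi_k$ on $A_k$ for some index $k$ that recurs at a stage $n > n_0$; at that stage Theorem G produced an element of $G_n$ extending $\varphi_k$, hence agreeing with $g$ on $S$. Since $\mathbb{Q}\mathbb{U}$ is countable and $ISO(\mathbb{Q}\mathbb{U})$ carries the topology of pointwise convergence, agreement on arbitrary finite sets means $\bigcup_n G_n$ is dense, proving Theorem A for $\mathbb{Q}\mathbb{U}$. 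The statement for $\mathbb{U}$ then follows because $\mathbb{Q}\mathbb{U}$ is dense in $\mathbb{U}$ and every isometry of $\mathbb{Q}\mathbb{U}$ extends uniquely to $\mathbb{U}$, so a dense locally finite subgroup of $ISO(\mathbb{Q}\mathbb{U})$ sits densely inside $ISO(\mathbb{U})$.

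The main obstacle I expect is the bookkeeping in the recursion: one must simultaneously (i) keep the sets increasing and exhausting a dense subset, (ii) keep the groups increasing as subgroups — here part 2 of Theorem G is essential, since a naive application of universality would only give an abstract embedding of $ISO(F_n)$, not one compatible with the inclusion $F_{n-1} \subset F_n$ — and (iii) arrange that each scheduled partial isometry actually gets extended, which requires scheduling each $(A_k,\varphi_k)$ infinitely often so that by the time $A_k$ has been absorbed into some $F_n$ a later stage still handles it. Making precise the use of ultrahomogeneity to glue the abstractly-produced $\bar F$ back onto the already-constructed configuration inside $\mathbb{Q}\mathbb{U}$ is the technical heart of the argument; everything else is routine.
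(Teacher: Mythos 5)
Your overall architecture is the same as the paper's: exhaust $\mathbb{Q}\mathbb{U}$ by an increasing chain of finite subspaces $F_n$, apply Theorem G at each step so that every partial isometry of the current space gets globalized one step later and clause 2 of Theorem G makes the finite isometry groups increase, take the union of the groups, and read off density from the topology of pointwise convergence; the passage from $\mathbb{Q}\mathbb{U}$ to $\mathbb{U}$ is also handled as in the paper (which simply invokes density of $ISO(\mathbb{Q}\mathbb{U})$ in $ISO(\mathbb{U})$). Your scheduling of pairs $(A_k,\varphi_k)$ with infinite recurrence is redundant: since Theorem G extends \emph{all} partial isometries of the current finite space, any $g|_S$ with $S\cup g(S)\subset F_m$ is automatically globalized at stage $m+1$; the paper just adds one point $x_{n+1}$ per stage.

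The one step that does not work as written is how you place the groups inside $ISO(\mathbb{Q}\mathbb{U})$: you apply Theorem 1 at every stage to get $j_n\colon ISO(F_n)\to ISO(\mathbb{Q}\mathbb{U})$ and then claim that ``by ultrahomogeneity we may arrange \dots the embedded group restricts to the previously built $G_{n-1}$''. Ultrahomogeneity does let you glue the abstract space $\bar F$ into $\mathbb{Q}\mathbb{U}$ fixing $F_{n-1}\cup\{y_n\}\cup A_n$ pointwise, but it does not let you make $j_n\circ T=j_{n-1}$ on $G_{n-1}$: the global isometry $j_{n-1}(h)$ was chosen before $F_n$ and the Theorem-G extension $T$ existed, it need not preserve $F_n$, and its restriction to $F_n$ will in general differ from $T(h)$; hence $j_{n-1}(h)\notin j_n(ISO(F_n))$, and the claim that $G_{n-1}\subset G_n$ as subgroups of $ISO(\mathbb{Q}\mathbb{U})$ breaks down. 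The repair --- which is what the paper implicitly does --- is not to globalize at finite stages at all: an element $g\in ISO(F_n)$ acts on $F_m$ for every $m\ge n$ via the iterated Theorem-G embeddings $ISO(F_n)\to ISO(F_{n+1})\to\cdots$, these actions are compatible because each extension restricts to the previous map, and since $\bigcup_m F_m=\mathbb{Q}\mathbb{U}$ the limit is a well-defined isometry of $\mathbb{Q}\mathbb{U}$. This realizes the direct limit $\bigcup_n G_n$ as a locally finite subgroup of $ISO(\mathbb{Q}\mathbb{U})$ with no coherence problem, and with that modification your density argument goes through verbatim.
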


   \begin{proof}
   {Again it is sufficient to prove Theorem A for the  rational universal space $\Bbb
   Q\Bbb U$, because the rational universal Urysohn space $\Bbb Q\Bbb U$ is
   a countable dense subset of $U$ and the group of isometries $ISO(\Bbb QU)$ is an
   uncountable and dense subgroup in $ISO(\Bbb U)$ (see \cite{CV}).
   Thus we must prove the existence of a locally finite subgroup in $ISO(\Bbb Q\Bbb U)$.

   Enumerate all points of $\Bbb Q\Bbb U$ as $x_1,x_2, \dots $. Choose
   $F_1=\{x_1\}$ and consider the identity isometry as the group $G_1 =\{{\rm Id}\}$.
   Suppose we already have an increasing sequence of the metric spaces
   $F_1 \subset F_2\subset \dots \subset F_n \subset \Bbb Q\Bbb U$  such that for each $k<n+1$,
   $x_i \in F_k; i=1, \dots k$ and each partial isometry
   of $F_k$, can be extended in $F_{k+1}$ to an isometry of $F_{k+1},  k=1, \dots, n$
   and the group of isometries of $F_k$ embed isomorphically to $ISO(F_{k+1})$
   accordingly to the theorem $B$.

   Consider now a metric space $F'=F_n \cup \{x_{n+1}\}$
   and apply to it Theorem $G$, which gives a finite metric space $F_{n+1}$
   in which all partial isometries of $F'$ can be extended to an isometry
   of $F_{n+1}$. We can embed $F_{n+1}$ as a subset of
   $\Bbb Q\Bbb U$ and extend an isometry of $F_{n+1}$ to the whole space.
   It is clear that the union $\cup_n F_n$ is the whole space $\Bbb Q\Bbb U$ and
   $\cup G_n=G_{\infty}$ is a locally finite subgroup of $ISO(\Bbb
   Q\Bbb U)$. The density of $G_{\infty}$ in  $ISO(\Bbb Q\Bbb U)$ (in the weak
   topology) is an easy consequence of the construction.}
\end{proof}

\smallskip
\noindent\textbf{Remark.}
   1. In the proof of Theorem A we used the existence of the universal space,
   only for enumeration of all finite metric spaces, it is obviously
   possible to change the proof so that to obtain a new proof of
   the existence of the universal space  simultaneously with Theorem $A$.

\smallskip
   2. The equivalence between the theorems $A$ and $G$ is useful itself because
   of interest of the question for what classes of universal objects
   both assertions are true or not. Sometime it could be easy to establish or
   to disprove one of them and consequently to establish or to disprove
   another. As in the case of category of graphs we will prove theorem $G$ for
   metric spaces and consequently establish theorem $A$ for the space $\Bbb U$.

We conclude this section with the following

\textbf{Conjecture.}{\it The group $ISO(\Bbb U)$ (as well as the group of automorphisms of the
universal graph $\Gamma$) contains well-known Hall's group as a dense subgroup.} Recall that Hall
group is the universal group in the class of all locally finite groups. It can be described as
inductive limit of the symmetric groups $S_n, n \geq 3$ with respect to embedding:
$$S_n \rightarrow S_{n!}, \quad n=3, \dots.$$ (the left shift in $S_n$ on the element $g \in S_n$ is considered as
permutation of $(1,\dots ,n)$.

\section{Distance matrices, criteria of Urysohnness and various constructions of Urysohn space}

We want to give some comments about distance matrices and construction of Urysohn space.

  1.\textbf{Admissible Lipschitz functions and admissible vectors}.
  It makes sense to systematize here the main characteristic properties
  of Urysohn space in terms of approximation of Lipschitz functions
  and to give various constructions of it.
  Let us call the Lipschitz function $u$ {\it admissible} if it satisfies to the inequalities:
  $|u(x)-u(y)|\leq \rho(x,y)\leq u(x)+u(y)$. The function $f$ on the metric space $(X,\rho)$
  is called {\it a  distance function} if it has a form of type $f(.)=\rho(x,.)\equiv f_x(.)$.
  It is clear that such a functions has Lipschitz norm 1.

 \begin{theorem} {The Polish space $(X,\rho)$ is isometric to Urysohn space
  $\Bbb U$ iff each admissible Lipschitz function $u(.)$ is a pointwise limit
  of the sequence of distance functions $f_{x_n}(.)$ for some sequence $\{x_n\}$ e.g.
  $u(z)=\lim_n f_{x_n}(z)$ for all $z \in X$.}
\end{theorem}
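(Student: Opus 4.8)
The statement is an "if and only if" characterization of the Urysohn space via approximation of admissible Lipschitz functions by distance functions. I would prove both directions separately, relying on the standard extension property of $\Bbb U$: every finite metric space extends by one point in $\Bbb U$, equivalently, for every finite $S\subset\Bbb U$ and every admissible vector $(a_s)_{s\in S}$ (one satisfying $|a_s-a_t|\le\rho(s,t)\le a_s+a_t$) there is a point $z\in\Bbb U$ with $\rho(z,s)=a_s$ for all $s$.

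\textbf{Necessity ($\Rightarrow$).} Assume $(X,\rho)$ is isometric to $\Bbb U$; fix an admissible Lipschitz function $u$ on $X$. Enumerate a countable dense subset $\{z_1,z_2,\dots\}$ of $X$. I would build the approximating sequence $\{x_n\}$ by finite approximation: at stage $n$, consider the finite set $S_n=\{z_1,\dots,z_n\}$ and the vector $(u(z_i))_{i\le n}$. The key sublemma is that this vector is admissible on $S_n$ — but this is immediate from the admissibility hypothesis on $u$, since $|u(z_i)-u(z_j)|\le\rho(z_i,z_j)\le u(z_i)+u(z_j)$. By the one-point extension property of $\Bbb U$ there is $x_n\in X$ with $\rho(x_n,z_i)=u(z_i)$ for all $i\le n$, i.e. $f_{x_n}(z_i)=u(z_i)$ for $i\le n$. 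Then for an arbitrary $z\in X$ and $\varepsilon>0$, pick $z_i$ with $\rho(z,z_i)<\varepsilon$; for all $n\ge i$ both $f_{x_n}$ and $u$ are $1$-Lipschitz and agree at $z_i$, so $|f_{x_n}(z)-u(z)|\le|f_{x_n}(z)-f_{x_n}(z_i)|+|u(z_i)-u(z)|<2\varepsilon$. Hence $f_{x_n}(z)\to u(z)$ pointwise (in fact uniformly on compacta). This gives pointwise convergence as required.

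\textbf{Sufficiency ($\Leftarrow$).} Assume every admissible Lipschitz $u$ is a pointwise limit of distance functions. I would verify the finite extension/universality property that characterizes $\Bbb U$ among Polish spaces: given a finite $S=\{s_1,\dots,s_m\}\subset X$ and an admissible vector $(a_i)$, I must produce points of $X$ whose distances to $S$ approximate $(a_i)$ arbitrarily well, and then invoke completeness plus a back-and-forth/closing-up argument. The natural move is to extend the finitely-supported data to a genuine admissible Lipschitz function on all of $X$: set $u_0(x)=\min_i\bigl(a_i+\rho(x,s_i)\bigr)$; one checks $u_0$ is $1$-Lipschitz, $u_0(s_i)\le a_i$, and — using admissibility of $(a_i)$ — in fact $u_0(s_i)=a_i$ and $u_0$ satisfies the lower bound $\rho(x,y)\le u_0(x)+u_0(y)$. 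Thus $u_0$ is admissible, so by hypothesis $u_0=\lim_n f_{x_n}$ pointwise, and in particular $\rho(x_n,s_i)\to a_i$ for each $i$. This yields approximate one-point extensions; a standard iteration (alternately adding points from a dense set and approximately realizing prescribed distance vectors, using that $X$ is complete) upgrades approximate realization to exact realization and shows $X$ satisfies the characteristic property of $\Bbb U$, hence is isometric to it.

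\textbf{Main obstacle.} The delicate point is the sufficiency direction: passing from \emph{approximate} realization of admissible distance vectors (which is all that pointwise limits of distance functions directly give) to \emph{exact} realization, which is what the extension property of $\Bbb U$ demands. This requires a careful closing-up argument interleaving the countable dense set of $X$ with a countable dense set of finite admissible configurations, controlling errors geometrically so the limit points exist by completeness and realize the required distances on the nose. Verifying that $u_0(x)=\min_i(a_i+\rho(x,s_i))$ is genuinely admissible (both inequalities, and $u_0(s_i)=a_i$) from admissibility of the vector $(a_i)$ is the technical heart that makes the hypothesis applicable; everything else is the classical Urysohn back-and-forth.
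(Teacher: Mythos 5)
Your proposal is correct, and at bottom it follows the same reduction the paper has in mind, but the paper's ``proof'' is a one-line deduction: it says the theorem follows immediately from the criterion of universality (the next theorem in the text, stated in terms of admissible vectors for distance matrices of a dense sequence and \emph{approximate} realization $\max_i|a_i-\rho(x_i,x_k)|<\epsilon$), which is quoted from \cite{V} rather than proved. You instead prove necessity directly from the exact finite one-point extension property of $\Bbb U$ plus the $1$-Lipschitz equicontinuity argument (correct, and slightly stronger than needed), and for sufficiency you supply exactly the bridge the paper leaves implicit: the Kat\v{e}tov extension $u_0(x)=\min_i\bigl(a_i+\rho(x,s_i)\bigr)$, whose admissibility and the identity $u_0(s_i)=a_i$ you verify correctly from $|a_i-a_j|\le\rho(s_i,s_j)\le a_i+a_j$, turning an admissible vector on a finite set into an admissible Lipschitz function on all of $X$ and hence into approximate realization of the vector by actual points. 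What you flag as the ``main obstacle'' --- upgrading approximate realization on a dense sequence, for a Polish space, to isometry with $\Bbb U$ --- is precisely the content of the criterion the paper cites from \cite{V}; so relative to the paper's own level of rigor there is no gap, and you could close your argument simply by invoking that criterion instead of sketching the back-and-forth yourself. The net effect is that your write-up is more self-contained where the paper is citational (the Kat\v{e}tov bridge and the direct necessity argument), while the paper's route makes the statement an immediate corollary of previously established machinery.
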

 In another words it means that the set of distance functions are weakly dense in the unit
 sphere of the of Lipschitz space with respect to duality with Kantorovich-Rubinstein space
 ( or equivalent duality) -see \cite{MPV}.
  The theorem immediately follows from the {\it criteria of universality}
 below which was done in \cite{V} and previous papers of the author which were mentioned in
  \cite{V}). The criteria formulated in the terms of notion of distance matrices:
  \begin{theorem}
  Let $(X,\rho)$ is a Polish space, and $\{x_i\}_{i=1}^{\infty}$ is an arbitrary
  everywhere dense sequence in $X$. Then $(X,\rho)$ is isometric to Urysohn space
  $\Bbb U$ iff for each $n\in \Bbb N$, $\epsilon >0$ and real positive vector
  $\{a_i\}_{i=1}^n$, such that
  $$|a_i-a_j|\leq \rho(x_i,x_j)\leq a_i+a_j \footnote{such vectors were called {\it admissible
  vector} for distance matrix $\{\rho(x_i,x_j)\}_{i,j=1}^n$.} $$
  there exist $k>n$ for which
  $$\max_{i=1,\dots n} |a_i-\rho(x_i,x_k)|<\epsilon.$$
  If the condition is valid for some everywhere dense sequence $\{x_n\}$ then it is true
  for all such sequences.
  \end{theorem}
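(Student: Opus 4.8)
The plan is to prove both directions of the equivalence. For the forward direction, suppose $(X,\rho)$ is isometric to $\Bbb U$. Given the dense sequence $\{x_i\}$, a real positive admissible vector $\{a_i\}_{i=1}^n$, and $\epsilon>0$, I want to produce $k>n$ with $\max_i|a_i-\rho(x_i,x_k)|<\epsilon$. The key fact is the defining extension property of Urysohn space: since the vector $\{a_i\}$ is admissible for the finite distance matrix $\{\rho(x_i,x_j)\}_{i,j\le n}$, the finite metric space on $\{x_1,\dots,x_n\}$ augmented by a new point $y$ at distances $\rho(y,x_i)=a_i$ is a genuine (rational-free) metric space; by universality/homogeneity of $\Bbb U$ there is a point $y\in X$ realizing exactly these distances. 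Then, using density of $\{x_i\}$, choose an index $k$ with $\rho(x_k,y)<\epsilon$; the triangle inequality gives $|a_i-\rho(x_i,x_k)|=|\rho(y,x_i)-\rho(x_i,x_k)|\le\rho(y,x_k)<\epsilon$ for every $i\le n$. One must check $k>n$ can be arranged (shrink $\epsilon$ if some $x_i$ with $i\le n$ happens to be close to $y$, or simply note that for small enough $\epsilon$ the point $x_k$ is distinct from $x_1,\dots,x_n$ unless $y$ already coincides with one of them, a degenerate case that is handled directly).

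For the converse, assume the approximation condition holds for the given dense sequence. I would invoke Theorem 6 (the criterion of universality stated just after) — but since the statement being proved is logically the same content repackaged, the honest route is to verify directly that $(X,\rho)$ satisfies the one-point extension property characterizing $\Bbb U$ among Polish spaces. Let $K\subset X$ be finite and let $\phi:K\to\Bbb R_{>0}$ be admissible, i.e. $|\phi(p)-\phi(q)|\le\rho(p,q)\le\phi(p)+\phi(q)$; I must find, for each $\delta>0$, a point $z\in X$ with $|\rho(z,p)-\phi(p)|<\delta$ for all $p\in K$. Approximate each $p\in K$ by some $x_{i(p)}$ from the dense sequence within $\delta'$, collect these into an index set, and check that the slightly perturbed vector $a_{i(p)}:=\phi(p)$ is still admissible for the distance submatrix (admissibility is stable under small perturbations of both the vector and the underlying distances, by the $1$-Lipschitz slack in the inequalities — this is the routine estimate). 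Apply the hypothesis to get $x_k$ with $\max|a_{i(p)}-\rho(x_{i(p)},x_k)|<\delta'$, and then $|\rho(x_k,p)-\phi(p)|\le|\rho(x_k,p)-\rho(x_k,x_{i(p)})|+|\rho(x_k,x_{i(p)})-\phi(p)|<\delta'+\delta'$. Choosing $\delta'$ small enough relative to $\delta$ finishes the one-point extension, and iterating over an exhaustion of $X$ by finite sets together with completeness yields an isometry with $\Bbb U$.

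Finally, I would address the last clause, that validity for one dense sequence implies validity for all. This follows because the approximation property as reformulated — density of distance functions $\{f_{x_n}\}$ in the admissible Lipschitz functions, or equivalently the one-point extension property — is an intrinsic property of the metric space $(X,\rho)$, not of the choice of enumeration. Concretely, given a second dense sequence $\{x'_m\}$ and an admissible vector for its first $n$ entries, one first uses the established one-point extension property of $X$ to realize the vector approximately by some $z\in X$, then approximates $z$ by an element $x'_m$ of the second sequence with large index $m>n$, exactly as in the forward direction above.

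The main obstacle I anticipate is the bookkeeping in the converse direction: one is handed an approximation statement phrased for $n$ \emph{consecutive} initial points of a \emph{fixed} enumeration, but the one-point extension property requires handling an \emph{arbitrary} finite configuration with an \emph{arbitrary} admissible target vector, so the work is in the two-layer approximation (dense sequence approximating the finite set, then the hypothesis approximating the target vector) and, crucially, in verifying that admissibility of the target vector is preserved when the true finite metric space is replaced by its nearby counterpart drawn from the dense sequence — the inequalities $|a_i-a_j|\le\rho(x_i,x_j)\le a_i+a_j$ must survive an $O(\delta')$ perturbation of the $\rho(x_i,x_j)$, which is where the $1$-Lipschitz margin built into the definition of admissibility is used.
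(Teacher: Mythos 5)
Your overall architecture (forward direction via exact one-point realization in $\Bbb U$ plus density and the triangle inequality; converse via an approximate one-point extension property and back-and-forth with completeness) is the standard argument, and it is consistent with what the paper does: the paper gives no proof of this theorem at all, but refers to \cite{V} and remarks that the proof is the usual amalgamation/back-and-forth scheme. The forward direction and the ``independence of the dense sequence'' clause are essentially fine as you sketch them.

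However, there is a genuine gap at the step you yourself single out as crucial in the converse. You claim that setting $a_{i(p)}:=\phi(p)$ yields a vector that is ``still admissible'' for the distance matrix of the approximating points $x_{i(p)}$, because ``admissibility is stable under small perturbations.'' That is false as stated: the defining inequalities can be tight. For example, if $\rho(p,q)=1$, $\phi(p)=1.5$, $\phi(q)=0.5$ (so $|\phi(p)-\phi(q)|=\rho(p,q)$ exactly) and the approximating points satisfy $\rho(x_{i(p)},x_{i(q)})=1-2\delta'$, then $|a_{i(p)}-a_{i(q)}|>\rho(x_{i(p)},x_{i(q)})$ and admissibility fails; note also that adding a constant to the $a$'s repairs only the upper inequality, never the lower one. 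The correct repair is to modify the vector, not merely perturb $\epsilon$: take the shortest-path amalgam of $K\cup\{y\}$ (with $d(y,p)=\phi(p)$) and $X$ over $K$, i.e.\ define $\psi(x):=\min_{p\in K}\bigl(\phi(p)+\rho(p,x)\bigr)$; then $\psi$ restricted to the points $x_{i(p)}$ is genuinely admissible (it is realized by the point $y$ of the amalgamated metric space) and satisfies $|\psi(x_{i(p)})-\phi(p)|\le\rho(x_{i(p)},p)<\delta'$, after which your two-layer estimate goes through. Two further points you gloss over: the hypothesis is stated only for the \emph{initial segment} $x_1,\dots,x_n$ of the fixed enumeration, so before invoking it you must extend the admissible vector from the scattered index set $\{i(p)\}$ to all of $\{1,\dots,\max_p i(p)\}$ (again by one-point amalgamation, e.g.\ the same $\min$-formula); and the final assertion that the approximate extension property plus completeness ``yields an isometry with $\Bbb U$'' is exactly Urysohn's uniqueness argument (an $\epsilon/2^n$-corrected back-and-forth), which is a real, if standard, piece of work rather than an immediate consequence.
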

  The distance matrix $\{r_{i,j}\}(=\{\rho(x_i,x_j)\})$ which satisfy to the
  condition of the theorem called {\it universal distance matrix}.
  So, universality of the matrix $\{\rho(x_i,x_j)\}$ does not depend
  on the choice of everywhere dense sequences $\{x_i\}$.

  In its turn the criteria is adaptation of the original ideas of P.Urysohn \cite{U}:
  {\it the Urysohn construction gives the universal rational distance matrix} -
  in this case we did not need in the previous formulation in $\epsilon>0$.
  For the rational case the proof of the criteria is a standard for model theory:
  we use amalgamation property for finite metric space, back and forth method and so on.
  This gives the {\it inductive} construction of the universal distance matrix and
  correspondingly universal Urysohn space (cf. \cite{V}). Remark that such construction is nothing
  more than reformulation of the construction of Fraisse limit which appeared many years later and
  which is simply inductive limit of the finite metric spaces in the traditional terminology (see \cite{Ku}).
  But the difference between classical model theory and our case
  consist in the following: after construction of the inductive limit we must make the completion
  of the space, I do not see how to axiomatize this construction for uncountable case.

   2.\textbf{Randomness and universality}. From the description of the universal distance matrices we can conclude that
   this matrices generate the everywhere dense $G_{\delta}$-set (generic) in the space of all distance matrices.
   It means that Urysohn space is generic in the appropriate sense (see\cite{V}) in the category of all
   Polish spaces. It leads to the remarkable fact that random (in the very wide sense) choice of limiting finite spaces
  also gives us the Urysohn space {\it after completion!} - it is not true without completion
  because even the set of values of the distances could be different and the limits itself are not
  isomorphic but completion erase this difference. This allows us to say that in a sense the randomness implies
  the universality. For countable case this effect is well-known (see f.e.\cite{PR}),
  and even uses in terminology -"the universal graph" people called incorrectly "random graph" which is not graph
  but the measure on the set of all graphs; nevertheless there is no ambiguity because with
  probability one (with respect to the distinguish measures!) the random graph is universal.
  But in the more complicate cases and in particular for continuous spaces this effect is more
  subtle: the role of the choice of measure or in another words, the sense of "randomness", - is very essential,
  it is not clear whether there is the distinguish measure (like uniform measure) \footnote
  {f.e., the probability measures on the set of countable universal triangle free graphs
  which are invariant with respect to the group of infinite permutations of the vertices,
  had been constructed recently by F.Petrov and author using ideas from the paper
  \cite{V1}}.

  3.\textbf{General Constructions}. Returning to the construction of Urysohn space recall that M.Katetov's \cite{K}
  (and later M.Gromov's \cite{G}) gave another construction, which is in fact also based on theorem
  above in continuous version. Namely, Katetov pointed out on the concrete and useful realization of the
  one-point extension of the given metric space $(X,\rho)$ -  $(X \cup {y} \equiv X', \rho')$
  such that the given admissible Lipschitz function $u$ on $X$ (with Lipschitz norm 1)
  becomes the distance function (in the sense of the definition above) on the space $(X'\rho')$:
  $u(x)=\rho'(y,x)$. This realization is the following: consider isometric embedding of the
  metric space $(X,\rho)$ to the space of bounded continuous functions ${\bar C}(X)$:
  (in terminology of \cite{MPV} this is the Hausdorf-Kuratowski embedding)
  $$HK:x\mapsto \rho(x,.)-\rho(x_0,.) \in {\bar C}(X).$$ Here $x_0$ is an arbitrary point of $x$
  \footnote{it is better to embed the space of formal differences $\delta_x-\delta_y$ to the
  ${\bar C}(X)$}: $HK(\delta_x-\delta_y)=\rho(x,.)=\rho(y,.)$ instead of choice an arbitrary point $x_0$
  Then as  it easy to check, needed metric space
  $(X'\rho')$ is the union of the image of the space $X$ and function $u_1(.)=u(.)-\rho(x_0,.)$ -  $HK(X)\cup u_1$ with
  sup-norm:$\|u_1(.)-HK(x)(.)\|=\|u(.)-\rho(x,.)\|=u(x)$.
  The inductive procedure which used this operation allows to construct after completion the
  universal Urysohn space. We use above some kind of the same trick in our proof of existence
  of the extension of the space in which the special partial isometry can be extended up to global
  isometry.

  4.\textbf{Shift-invariant constructions}. In the paper \cite{CV} the construction of Urysohn space
  was done with additional assumptions: which shortly can be describe as follow:

Consider an invariant metric $r$ on the group $G$ which is either $\Bbb Z$ or $\Bbb R$, and denote
$\phi(.)$ where $r(x,y)=\phi(x-y)$ the positive continuous function on the group $G$ with property
which is equivalent to be a metric for $r$:

$$\phi(0)=0,\quad \phi(g)>0\quad (g \ne 0), \quad \phi(g)=\phi(-g),\quad \phi(g_1+g_2)\leq
\phi(g_1)+\phi(g_2).$$ The completion of the group $G$ under an
invariant metric called {\it monothetic metric group}. This is of
course the group; we denote it $G^r$; $G^r$ is the abelian group
with the invariant metric. Subgroup $G \in G^r$ is by definition
everywhere dense in $G^r$ and the shift is isometry with dense orbit
($G$). Thus the group $G$ as the group of shifts is the subgroup of
the group $ISO(G^r)$ of all isometries of $G^r$ (see \cite{CV}) Many
monothetic groups are known in ergodic theory, harmonic analysis
(f.e. so called Kronecker group etc.) Recall that metric on a space
$X$ called universal if the space $X$ or its completion with respect
to that metric is isometric to the Urysohn space. Remark that the
invariant metrics on the group $G$ is very meagre in the set of all
metric on $G$ nevertheless the following fact is true:
 \begin{theorem} (\cite{CV})
There is an invariant universal metric on the group $G$, moreover, the set of universal invariant
metrics is generic (everywhere dense $G_{\delta}$-set) in the set of all invariant metrics on $G$
equipped with a natural topology.In another words; there exists monothetic metric group which are
isometric to Urysohn space as metric space, more directly? the Urysohn space can have the structure
of monothetic (abelian) metric group for $\Bbb Z$ as well as for $\Bbb R$. There are uncountably
many of such structures on the Urysohn space which are mutually not isomorphic as a metric groups.
 \end{theorem}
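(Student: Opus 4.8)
The plan is to reduce everything to the universality criterion (Theorem 5, equivalently Theorem 4) and to a single \emph{invariant one-point extension lemma}, after which both ``existence'' and ``genericity'' drop out of the Baire category theorem. Treat $G=\mathbb Z$ first; the case $G=\mathbb R$ is parallel, replacing ``function on $\mathbb Z$'' by ``continuous even function on $\mathbb R$'' and ``$\mathbb Z$'' by a fixed countable dense subgroup. Identifying an invariant metric $r$ with its generating function $\phi$, i.e.\ for $G=\mathbb Z$ with the sequence $(\phi(n))_{n\ge 1}\in(0,\infty)^{\mathbb N}$, the metric conditions $\phi(n+m)\le\phi(n)+\phi(m)$ and $\phi(n)\le\phi(n+m)+\phi(m)$ (all $n,m\ge 0$) are closed, so the set $\mathcal M$ of all invariant metrics is a closed subspace of a Polish space, hence a Polish (in particular Baire) space; the coordinatewise topology (for $\mathbb R$: uniform convergence on compacta) is the ``natural topology'' of the statement. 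Fix an enumeration $x_1,x_2,\dots$ of $\mathbb Z$. Applying Theorem 5 to the completion of $(\mathbb Z,d_\phi)$, in which $\mathbb Z$ is dense, the metric $\phi$ is \textbf{universal} (its completion is $\mathbb U$) exactly when for every $n$, every \emph{rational} vector $a=(a_1,\dots,a_n)$ admissible for the matrix $\{\phi(x_i-x_j)\}_{i,j\le n}$, and every rational $\varepsilon>0$, there is an index $k>n$ with $\max_{i\le n}|a_i-\phi(x_i-x_k)|<\varepsilon$; the passage from real to rational $a,\varepsilon$ is the usual approximation, using that the admissible set is convex and has nonempty interior (e.g.\ all coordinates equal to a large constant). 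Note that this condition already forces $\phi$ to be unbounded (take $a$ constant and larger than $\sup\phi$), so the completion is genuinely $\mathbb U$ rather than a bounded Urysohn space.

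For each such $(n,a,\varepsilon)$ put $A_a=\{\phi\in\mathcal M:\ a\ \text{admissible for}\ \{\phi(x_i-x_j)\}_{i,j\le n}\}$ (closed) and $W_{n,a,\varepsilon}=\{\phi\in\mathcal M:\ \exists\,k>n,\ \max_{i\le n}|a_i-\phi(x_i-x_k)|<\varepsilon\}$ (open). Then the set of universal invariant metrics equals $\bigcap_{(n,a,\varepsilon)}\big((\mathcal M\setminus A_a)\cup W_{n,a,\varepsilon}\big)$, a countable intersection of open sets, hence a $G_\delta$ in $\mathcal M$. By the Baire category theorem it suffices to prove that each $(\mathcal M\setminus A_a)\cup W_{n,a,\varepsilon}$ is dense; since $\mathcal M\setminus A_a$ is open, only the case $\phi_0\in A_a$ needs attention, and there the whole problem is the following \textbf{extension lemma}: given $\phi_0\in\mathcal M$, an integer $N$, and a vector $a=(a_1,\dots,a_n)$ admissible for $\{\phi_0(x_i-x_j)\}$ with $x_1,\dots,x_n\in\{0,1,\dots,N\}$ (allowed after a translation), there are $\phi\in\mathcal M$ agreeing with $\phi_0$ on $\{1,\dots,N\}$ and an arbitrarily large integer $M$ (hence of enumeration index $>n$) with $|\phi(M-x_i)-a_i|<\varepsilon$ for all $i\le n$ --- that is, one can extend the invariant metric so that the admissible one-point configuration $a$ is approximately realised by a \emph{translate} of $\phi$.

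I expect this extension lemma to be the crux, for a structural reason absent from the classical Urysohn/Katetov construction. The underlying set is fixed as $\mathbb Z$, so one cannot literally \emph{adjoin} a new point: the ``new point'' must be an already present far-away integer $M$, and prescribing its distances to $x_1,\dots,x_n$ amounts to \emph{re-engineering $\phi$ on a large region} subject to two constraints --- subadditivity and the reverse triangle inequality $|\phi(t)-\phi(t+k)|\le\phi(k)$, the latter capping the rate at which $\phi$ may change (by at most $\min_{1\le k\le N}\phi_0(k)/k>0$ per unit near the prescribed block). This is precisely why ``$M$ large'' is essential: it gives room for $\phi$ to ramp, with that bounded slope, from its prescribed values on $\{1,\dots,N\}$ into a far ``valley'' around $M$, and inside the valley to interpolate the target values $a_i$ at the points $M-x_i$ by a McShane/Katetov-type formula such as $\phi(t)=\max\!\big(\delta,\ \max_i\big(a_i-\phi_0(|t-(M-x_i)|)\big)\big)$, which is $1$-Lipschitz for the pseudometric of $\phi_0$ (so it preserves subadditivity) and equals $a_i$ at $t=M-x_i$ precisely because admissibility gives $a_i-a_j\le\phi_0(|x_i-x_j|)$; one caps this off with a slowly varying or eventually constant tail so that $\phi$ is defined on all of $\mathbb Z$. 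The delicate part --- and the heart of \cite{CV} --- is the bookkeeping that all triangle inequalities across the prescribed block, the ramp, the valley, and the tail survive; this is the ``same kind of trick'' used in the Hausdorff--Kuratowski/Katetov one-point extension recalled above and in the proof of Theorem 1. Granting the lemma, each $(\mathcal M\setminus A_a)\cup W_{n,a,\varepsilon}$ is dense, so the universal invariant metrics form a dense $G_\delta$ in $\mathcal M$; in particular they exist. The argument for $G=\mathbb R$ is identical once the McShane interpolation is taken continuous in $t$.

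Finally, to obtain a continuum of \emph{pairwise non-isomorphic} monothetic metric groups one exploits the flexibility of the construction rather than any counting of isometries (all completions are isometric to $\mathbb U$, so metric-space invariants are useless here): one checks that for a continuum of mutually incompatible group-theoretic behaviours of the completion --- for instance its divisibility and torsion pattern relative to the various primes, or, in the $\mathbb R$ case, the ``scaling class'' $\{c\phi:c>0\}$ --- there are universal invariant metrics realising each behaviour (the above construction is robust enough to impose any one of them alongside universality), while two completions exhibiting different patterns cannot be isomorphic as metric groups. I omit the details of this last step, which is a refinement rather than a new idea.
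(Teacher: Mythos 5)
Your Baire-category skeleton (identify an invariant metric with its generating function $\phi$, observe that the universality criterion of Theorem 5 cuts out a countable intersection of open sets, reduce genericity to a density statement, and reduce density to an invariant one-point extension lemma) is a reasonable frame for the genericity claim. But the entire substance of the theorem is the extension lemma you yourself call ``the crux'' and then do not prove: you write the valley values by a Katetov/McShane formula over $d_{\phi_0}$ and concede that ``the bookkeeping \dots is the heart of \cite{CV}.'' That bookkeeping is not a routine refinement, and your formula as written does not obviously survive it, because the Lipschitz and subadditivity constraints the new $\phi$ must satisfy are with respect to $\phi$ itself, not $\phi_0$. Concretely: for two valley points $t,t'$ whose difference exceeds $N$ or lands in the ramp you need $|\phi(t)-\phi(t')|\le\phi(|t-t'|)$, while McShane only gives the bound with $\phi_0(|t-t'|)$, and on such differences you may already have made $\phi$ smaller than $\phi_0$; moreover sums of two valley differences land near $2M$, forcing $\phi$ to be small there (of order $a_i+a_j$ or $2\delta$), and those newly small values constrain everything within reach of $2M$, then $3M$, and so on. Showing this cascade can always be closed while keeping $\phi$ positive, symmetric, subadditive and equal to $\phi_0$ on the prescribed block is exactly the difficulty; ``I omit the details'' leaves the proof without its main step. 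The final claim (a continuum of pairwise non-isomorphic monothetic group structures) is likewise only gestured at: it is not explained why the proposed group-theoretic ``patterns'' of the completion can be imposed alongside universality, nor why they are invariants of isomorphism of metric groups.

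For comparison, the paper sidesteps the need to re-engineer an already given invariant metric far away. Its mechanism is the billiard lemma: for a Toeplitz distance matrix $M$ of order $n$, any two admissible vectors $x,y\in Adm(M)$ can be joined by a finite chain of admissible vectors in which each term is a coordinate shift of the previous one ($a^{k+1}_i=a^k_{i-1}$, $i=2,\dots,n$). This lets one build the universal distance matrix column by column, inserting ``billiard fragments'' between consecutive target admissible vectors so that the Toeplitz (shift-invariance) constraint is never violated, and universality then follows from the same Urysohn-type induction as in the unconstrained case; no perturbation argument is needed. Note also that the paper proves only the existence statement this way and refers to \cite{CV} (which in turn uses \cite{C}) for genericity and for the uncountable family of non-isomorphic group structures, which is a further sign that these parts are not free. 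As it stands, your proposal has a genuine gap at its central point; to complete it you would need a proof of the extension lemma at the level of care of the billiard lemma or of \cite{CV}.
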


 The problem of the isomorphism of that group as topological group is open.

Remember that locally compact monothetic groups are compact (f.r. torus or group of integer p-adic
numbers etc.). The proof of the existence of such metric is far from evidence and was proved in
\cite{CV} with using of the fact which was proved in the paper \cite{C} about integer metric
spaces. Here we formulate the interesting property of the set of finite dimensional distance
matrices which gives more transparent proof of the existence of universal invariant metric in terms
of admissible sets.

Our goal is to construct universal Toeplitz distance matrix $$\{r_{i-j}\}_{i,j} \quad i,j \in \Bbb
Z.$$ The problem is:  we can not construct the distance matrix as in general case, adding a new
points into the space (or column of the distance matrix) in arbitrary way because we must satisfy
to the Toeplitz condition. So we can add a new column only after several intermediate steps. For
this we can use the following finite-dimensional lemma.

Lemma ("billiard lemma"). For each $n \in \Bbb N$ consider a distance matrix of order $n$ -
$M=\{r_{i-j}\}_{i,j} \quad i.j =1 \dots n$, define the set $Adm M$ of admissible vectors for $M$:
$$Adm(M)=\{a \in {\Bbb R}^n: a_i-a_j\leq r_{i-j}\leq a_i+ a_j, i,j=1,\dots n \}\footnote{see footnote 1}$$
and let $x, y \in Adm(M)$ - two vectors from $Adm(M)$.
 Then there exist a sequences of the vectors $$a^n=x,a^{n+1},a^{n+}2 \dots a^k,\dots a^N=y$$ which are belong
 to $Adm(M)$ and such that for each $$k: a^{k+1}_i=a^k_{i-1}, i=2,\dots n, k=n, \dots N-1.$$

 \begin{proof}
We will start with the case $n=2$. In this case the set $Adm$ depends on one positive number -
$r_{1,2}=r$, and $$Adm_r=\{(a_1,a_2): |a_1-a_2|\leq r \leq a_1+a_2\}$$. Suppose we have two vectors
$x,y\in Adm $, define the sequence of vectors as a vertices of the "billiard" piece-wise linear
trajectory which belongs to $Adm$, and whose edges are alternatingly parallel to the first or
second axis; the vertices of the path belongs either to the diagonal $X=Y$, or the boundary of
$Adm(M)$ - $|X-Y|=r$. In order to describe the vectors we have to compare  $x_1$ and $y_2$, without
diminishing of the generality we can assume that $x_1 geq y_2$, and suppose that
$y_2+kr<x_1<y_2+(k+1)r, k \in N$.  Define the chain:
$$a_3=(x_1,x_2)\mapsto a_4=(x_1,x_1)\mapsto a_5=(y_2+k r,x_1)\mapsto a_6=(y_2+k r,y_2+k r)\mapsto$$
$$\mapsto a_7=(y_2+(k-1)r,y_2+k r)\dots (y_2+r,y_2+r)\mapsto(y_2,y_2+r)\mapsto(y_2,y_2)\mapsto
a_N=(y_1,y_2).$$ This chain defines piece-wise linear path with segment between two adjacent
vectors so we obtain the trajectory like billiard orbit. However the even vectors $a_{2k}$ plays
only geometrical role (billiard trajectory), so the sequences of columns of the matrix are the
vectors
$$(x_1,x_2),(y_2+k r,x_1),(y_2+(k-1)r,y_2+k r),...(y_2,y_2+r),(y_1,y_2),$$ this links the vector
$x=(x_1,x_2)$ and vector $y=(y_1,y_2)$ in appropriate (Toeplitz) way: each the second coordinate of
the vector is equal to the first coordinate of the previous.

 If $x_1 \leq y_2$, say, $y_2-(k+1)r \leq x_1\leq y_2-kr$, then we can use the same procedure
 with intermediate values of coordinates: $y_2-kr,y-(k-1)r etc.$.

 Let us consider the general $n$; we have two vectors $(x_1, \dots x_n),(y_1, \dots y_n) \in Adm(M)$,
 where $M$ is the distance
 matrix of order $n$. The sequence of the vectors $a^k$ defines in the same way: compare coordinates
 $x_1$ and $y_n$, if the vector $(y_n,x_1,\dots x_{n-1}) \in Adm(M)$ - is admissible: then consequently
 we have chain $a_n=(x_1, \dots x_n)\mapsto a_{n+1}=(x_1,x_1,x_2 \dots x_{n-1})\mapsto a_{n+2}=
 (y_n,x_1,\dots x_{n-1})\mapsto \dots\mapsto ,(y_1,y_2,\dots y_n)$. In opposite case (if
 $(y_n,x_1,\dots x_{n-1}) \in Adm(M)$ is not admissible) the crucial role plays again the inequality between
 $x_1$ and $y_n$ and again intermediate values of $y_n -...$ give the needed sequences of the vectors $a^k$.
 Let us call the sequence of vectors $a^k$ which combine the vectors $x$ and $y$ - "billiard
 fragments". Now we can construct the universal Toeplitz distance matrix in the same way as in the Urysohn
 construction: choose the sequence of all admissible vectors and include in between of two neighbors
 if necessary the billiard fragments. The criteria of universality is valid for this sequence and Toeplitz
 property is true by definition.
 \end{proof}

\end{document}